 \newcommand{\C}{\ensuremath{\mathbb{C}}}
 \newcommand{\D}[2]{\ensuremath{ \frac{\partial{#1}}{\partial{#2}}}}
 \newcommand{\R}{\ensuremath{\mathbb{R}}}
 \newcommand{\Z}{\ensuremath{\mathbb{Z}}}
 \newcommand{\CP}{\ensuremath{\mathbb{CP}}}
 \newcommand{\st}{\ensuremath{\sqrt{-1}}}
 \newcommand{\KRf}{K\"ahler Ricci flow\;}
 \newcommand{\KRfd}{K\"ahler Ricci flow.\;}
 \newcommand{\KRF}{K\"ahler Ricci Flow\;}
 \DeclareMathOperator{\Vol}{Vol}
 \DeclareMathOperator{\diam}{diam}
 \newcommand{\Blow}[1]{\ensuremath{\mathbb{CP}^2 \# {#1}\overline{\mathbb{CP}}^2}}
 \newcommand{\PxP}{\ensuremath{\mathbb{CP}^1 \times \mathbb{CP}^1}}
 \newcommand{\norm}[2]{{ \ensuremath{\|} #1 \ensuremath{\|}}_{#2}}
 \newcommand{\sconv}{\ensuremath{ \stackrel{C^\infty}{\longrightarrow}}}
 \def\ExtendSymbol#1#2#3#4#5{\ext@arrow 0099{\arrowfill@#1#2#3}{#4}{#5}}
 \definecolor{hao}{rgb}{1,0.5,0}
 \definecolor{miao}{cmyk}{0.5,0,0.2,0.2}
 \definecolor{qiao}{gray}{0.96}
 \newtheorem{proposition}{Proposition}[section]
 \newtheorem{lemma}{Lemma}[section]
 \newtheorem{definition}{Definition}[section]
 \newtheorem{remark}{Remark}[section]
 \newtheorem{theoremin}{Theorem}
 \newtheorem{corollaryin}{Corollary}
 \title{K\"{a}hler Ricci Flow on Fano Surfaces (I)}
 \author{Xiuxiong Chen\footnote{Partially supported by a NSF grant.}\;,  Bing Wang}
 \date{}
\begin{document}
 \maketitle

 \begin{abstract}
   We show the properties of the blowup limits of \KRf solutions on
   Fano  surfaces if Riemannian curvature is unbounded.
    As an application, on every toric Fano surface, we prove that
     \KRf converges to a K\"ahler Ricci soliton metric
   if the initial metric has toric symmetry. Therefore we give a new
   Ricci flow proof of existence of K\"ahler Ricci soliton metrics
   on toric surfaces.
\end{abstract}

  \section{Introduction}

 This is the first part of our study of K\"ahler Ricci flow on Fano
 surfaces.  In this note, we study the convergence of
 \KRf on Fano surfaces if Riemannian curvature is uniformly bounded and
 we discuss the methods to obtain the Riemannian curvature bound.\\

 In~\cite{Ha1}, Hamilton defined Ricci flow and applied it to prove
 that every simply connected 3-manifold with positive Ricci curvature metric admits
 a constant curvature metric, hence it is  diffeomorphic to $S^3$.  From then on, Ricci flow
 became a
 powerful tool to search Einstein metrics on manifolds. If the
 underlying manifold is a K\"ahler manifold whose first Chern class has definite sign,
 then the normalized Ricci flow
 is called the K\"ahler Ricci flow.  It was proved by Cao~\cite{Cao85}, who
 followed Yau's fundamental estimates, that \KRf always exists
 globally.  If the first Chern class of the underlying manifold is
 negative or null, Cao showed that \KRf will converge to  a K\"ahler Einstein (KE) metric.
 If the first Chern class of the underlying
 manifold is positive,  then the convergence is much harder and still not clearly now.
 The first breakthrough in this
 direction is the work of~\cite{CT1} and~\cite{CT2}. There Chen and
 Tian showed that the \KRf converges to a KE metric if the initial
 metric has positive bisectional curvature.  Around 2002, Perelman
 made some fundamental estimates along K\"ahler Ricci flow,
 he showed that scalar
 curvatures, diameters and normalized Ricci potentials are all
 uniformly bounded along every \KRfd Together with his
 no-local-collapsing theorem, these estimates give us a lot of
 information.   After Perelman's fundamental estimates,
 there are numerous works concerning the convergence of K\"ahler
 Ricci flow.   Due to the limited knowledge of authors, we'll not
 give a complete list of all the contributors.\\

  In this note, we only consider the convergence of \KRf on Fano
  surfaces. If we assume Riemannian curvature is uniformly bounded along the flow, then
  we can show that this flow converges to a K\"ahler Ricci soliton (KRS) metric in the same
  complex structure.
 \begin{theoremin}
   Suppose $\{(M, g(t)), 0 \leq t < \infty \}$ is a \KRf solution on
   a Fano surface $M$, $J$ is the complex structure compatible with $(M, g(t))$.
   If Riemannian curvature is uniformly
   bounded along this flow,  then for every sequence $t_i \to \infty$,
   there is a subsequence of times $t_{i_k}$ and diffeomorphisms
   $\Phi_{i_k}: M \to M$ such that
  \begin{align*}
  \Phi_{i_k}^*g(t_{i_k}) \sconv  h,  \quad
  (\Phi_{i_k})_{*}^{-1} \circ J \circ (\Phi_{i_k})_{*}  \sconv J.
  \end{align*}
  under a fixed gauge.  Here $h$ is a K\"ahler metric compatible with complex structure
  $J$ and $(M, h)$ is a K\"ahler Ricci soliton metric, i.e., there
  exists a smooth function $f$ on $M$ such that
  \begin{align*}
         Ric_h - h= \mathscr{L}_{\nabla f} h.
  \end{align*}
 \label{theoremin: bounded}
 \end{theoremin}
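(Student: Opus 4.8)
The plan is to combine Hamilton's Cheeger--Gromov compactness theorem with Perelman's entropy monotonicity: first extract a smooth limit metric, then recognize it as a gradient shrinking soliton. I would begin by fixing a sequence $t_i \to \infty$ and setting up the subsequential convergence. By hypothesis the full Riemannian curvature tensor is uniformly bounded along the flow, and Perelman's no-local-collapsing theorem supplies a uniform lower bound on the injectivity radius; together with Perelman's uniform bounds on scalar curvature, diameter, and the normalized Ricci potential, the geometry of $(M, g(t))$ is uniformly controlled at all scales. Applying Hamilton's compactness theorem in its form for a fixed compact manifold with non-collapsed, curvature-bounded geometry, I obtain a subsequence $t_{i_k}$ and diffeomorphisms $\Phi_{i_k}\colon M \to M$ with $\Phi_{i_k}^* g(t_{i_k}) \sconv h$ for a smooth limit metric $h$; since the diameter stays bounded and there is no collapsing, the limit is realized on $M$ itself.

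The central step is to identify $h$ as a soliton by exploiting Perelman's monotone functional. Along the normalized \KRf the $\mu$-entropy (equivalently, the infimum of the $\mathcal{W}$-functional at the natural shrinking scale) is non-decreasing in $t$ and, by Perelman's uniform estimates, bounded above. Hence it converges, its time-derivative is integrable on $[0,\infty)$, and there is a sequence of times along which that derivative tends to zero. Because the entropy derivative equals a weighted $L^2$-norm of the soliton defect $Ric + \nabla^2 f - h$, where $f = f_t$ is the minimizer realizing $\mu(g(t))$, this drives the defect to zero in the limit. To make the passage rigorous I would establish uniform $C^\infty$ bounds on the minimizers $f_t$, which solve a conjugate-heat (elliptic) equation whose coefficients are uniformly controlled by the bounded geometry; then along a further subsequence $f_{t_{i_k}}\circ\Phi_{i_k}$ converges to a smooth potential $f_\infty$. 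Passing the vanishing-defect relation to the smooth limit yields
\begin{align*}
  Ric_h + \nabla^2 f_\infty = h,
\end{align*}
i.e. $h$ is a gradient shrinking Ricci soliton, which in the present \Kahler normalization is exactly the equation $Ric_h - h = \mathscr{L}_{\nabla f} h$ of the statement after renaming the potential and absorbing the standard constant.

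Finally I would verify that the limit respects the \Kahler structure. All $g(t)$ are \Kahler with respect to the fixed $J$ and the flow stays in the (preserved) anticanonical class, so $J$ is parallel with uniformly bounded covariant derivatives; consequently the pulled-back complex structures $(\Phi_{i_k})_*^{-1}\circ J\circ(\Phi_{i_k})_*$ subconverge, and after adjusting the gauge they converge to $J$, with $h$ \Kahler with respect to $J$. The soliton vector field $\nabla f_\infty$ is then the real part of a holomorphic vector field, so $(M, h)$ is a \Kahler Ricci soliton. The main obstacle I anticipate is precisely the passage from entropy monotonicity to the pointwise soliton equation: one must control the minimizers $f_t$ uniformly and simultaneously with the metric convergence, and arrange the Ricci-flow gauge so that the metrics and the complex structures converge to mutually compatible limits. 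Reconciling this gauge freedom with the fixed complex structure $J$ is the delicate point.
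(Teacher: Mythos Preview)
Your compactness step and the identification of the limit as a gradient shrinking soliton via Perelman's $\mu$-monotonicity match the paper's argument (the paper simply cites Sesum's work for this part). The genuine gap is in your final paragraph, and you have in fact put your finger on it yourself: the assertion that ``after adjusting the gauge'' the pulled-back complex structures converge to $J$ is not justified by anything you wrote, and in general it is false. Cheeger--Gromov compactness only yields $(\Phi_{i_k})_*^{-1}\circ J\circ(\Phi_{i_k})_*\to \hat{J}$ for \emph{some} integrable complex structure $\hat{J}$ compatible with $h$; there is no mechanism in your outline that forces $\hat{J}$ to be biholomorphic to $J$, and the paper itself remarks that in higher dimension one cannot expect this.

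The paper closes this gap by exploiting the classification of Fano surfaces, splitting into two cases according to $c_1^2(M)$. When $c_1^2(M)\ge 5$ (i.e.\ $M$ is $\CP^2$, $\PxP$, or $\Blow{k}$ with $k\le 4$) the underlying smooth manifold carries a \emph{unique} complex structure making it Fano, so $\hat{J}$ is necessarily pulled back to $J$ by a diffeomorphism, and composing with that diffeomorphism gives the desired $\Phi_{i_k}$. When $1\le c_1^2(M)\le 4$ (i.e.\ $\Blow{k}$ with $5\le k\le 8$) there are many Fano complex structures, so this uniqueness argument fails; but now $M$ carries no nonzero holomorphic vector fields, so the limit soliton is in fact KE, the first eigenvalue of $-\triangle$ along the flow is eventually bounded below by $1+\delta$, and this yields exponential decay of $\dot\varphi$. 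Hence the K\"ahler potentials $\varphi(t)$ stay uniformly bounded in a \emph{fixed} gauge, and Yau's estimates upgrade this to convergence of the metrics with $\Phi_{i_k}=\id$, so the complex structure issue never arises. Your proposal contains neither of these two mechanisms; without one of them the claim that the limit is K\"ahler with respect to the \emph{original} $J$ does not follow.
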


 In general dimension K\"ahler Ricci flow,
 even if Riemannian curvature is uniformly bounded,
 we can only obtain $(\Phi_{i_k})_{*}^{-1} \circ J \circ (\Phi_{i_k})_{*}  \sconv \tilde{J}$
 for some complex structure $\tilde{J}$ compatible with $h$.
 $\tilde{J}$ may be different from $J$.
 The reason that we can obtain the same $J$ here is that we are
 dealing with Fano surfaces now.   The classification of Fano surfaces
 gives us a lot of information about the limit complex structures.\\

  In order to set up a uniform Riemannian curvature bound, we
  observe that there are actually some topological and geometric
   obstructions for
  Riemannian curvature to be unbounded  along the flow.  Because if the
  Riemannian curvature is not uniformly bounded, we can blowup the
  flow at maximal Riemannian curvature points. We denote such blowup
  limits as ``deepest bubbles" and find that they satisfy strong
  topological and geometric conditions.

  \begin{theoremin}
    Suppose $\{(M, g(t)), 0 \leq t < \infty\}$ is a \KRf solution on a
  Fano surface $M$. If Riemannian curvature is not uniformly
  bounded, then every deepest bubble $X_{\infty}$ is a finite
  quotient of a hyper K\"ahler ALE manifold.   Moreover, $X_{\infty}$
  doesn't contain any compact divisor.
  \label{theoremin: bubble}
  \end{theoremin}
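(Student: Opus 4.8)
The plan is to form the bubble by parabolic rescaling at points of maximal curvature and then to identify the limit using Perelman's estimates together with Hamilton's compactness theorem. First I would pick times $t_i \to \infty$ and points $x_i \in M$ with $Q_i := |Rm|(x_i,t_i) = \max_{M\times[0,t_i]}|Rm| \to \infty$, which is possible precisely because the curvature is unbounded, and set $g_i(s) := Q_i\, g\big(t_i + Q_i^{-1}s\big)$. On these rescaled flows $|Rm_{g_i}| \le 1$ for $s \le 0$ and $|Rm_{g_i}|(x_i,0) = 1$, and since rescaling turns $\partial_s g_i = -Ric(g_i) + Q_i^{-1} g_i$, the normalization term of the \KRf is dilated away. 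Perelman's no-local-collapsing theorem gives a uniform injectivity-radius lower bound at $(x_i,0)$, so Hamilton's compactness theorem yields a pointed $C^\infty$ Cheeger--Gromov limit $(X_\infty, g_\infty(s), x_\infty)$ which is a complete ancient Ricci flow with $|Rm_{g_\infty}|\le 1$ and $|Rm_{g_\infty}|(x_\infty,0)=1$; in particular $X_\infty$ is non-flat, it is four-dimensional by non-collapsing, and it is non-compact because Perelman's diameter bound on $g(t)$ forces $\diam(M,g_i) \to \infty$.

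Next I would show $X_\infty$ is Ricci-flat. Perelman bounds $|R|$ uniformly along the unrescaled flow, so the scalar curvature of $g_i$ is $R/Q_i \to 0$ and the limit satisfies $R_{g_\infty}\equiv 0$ at every time. Inserting this into the evolution equation $\partial_s R = \triangle R + 2|Ric|^2$ for the limit flow forces $Ric_{g_\infty}\equiv 0$, so $X_\infty$ is a complete Ricci-flat Kähler surface. In real dimension four a Ricci-flat Kähler metric has restricted holonomy in $SU(2)=Sp(1)$; hence, after passing to the universal cover and quotienting by a finite group, $X_\infty$ is hyperkähler.

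To obtain the ALE structure I would verify the two hypotheses of the Bando--Kasue--Nakajima construction of coordinates at infinity: a finite $L^2$ curvature bound and maximal volume growth. The $L^2$ bound should follow from the scale invariance of $\int|Rm|^2$ in dimension four together with the Gauss--Bonnet and signature formulas, whose topological right-hand sides are fixed by $M$ and survive in the limit; maximal (Euclidean) volume growth should follow by combining the lower volume ratio from no-local-collapsing with the matching upper bound from Bishop--Gromov for the Ricci-flat limit. Granting these, $X_\infty$ is a hyperkähler ALE space up to a finite quotient. I expect this to be the main obstacle: making the curvature-integral control and the two-sided volume comparison precise in the non-compact limit, and then invoking the asymptotic-coordinate theorem, is the delicate analytic heart of the argument, whereas the remaining step is comparatively soft.

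Finally, for the absence of a compact divisor I would use that the Kähler class is frozen: along the normalized \KRf on the Fano surface $[\omega(t)] = 2\pi c_1(M)$ for all $t$, so for any $2$-cycle the period $\int \omega(t)$ equals $2\pi\langle c_1(M),\cdot\rangle$, which lies in the discrete set $2\pi\Z$ since $c_1(M)\in H^2(M;\Z)$. A compact divisor $D\subset X_\infty$ would be a compact curve holomorphic for the limit complex structure $J_\infty$, hence of positive finite area $A=\int_D\omega_\infty$, and would arise through the Cheeger--Gromov diffeomorphisms as nearly $J$-holomorphic surfaces $\psi_i(D)\subset(M,J)$ with $Q_i\int_{\psi_i(D)}\omega(t_i)\to A$. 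But $\int_{\psi_i(D)}\omega(t_i)\in 2\pi\Z$ and is positive (Wirtinger, since $\psi_i(D)$ is nearly calibrated), so the left-hand side is at least $2\pi Q_i\to\infty$ and cannot converge to the finite positive number $A$. This contradiction shows $X_\infty$ carries no compact divisor, which in hyperkähler terms means $J_\infty$ is not the special complex structure whose exceptional locus is holomorphic.
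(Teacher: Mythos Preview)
Your outline follows essentially the same route as the paper: parabolic rescaling at curvature maxima, Perelman's scalar bound forcing the limit to be Ricci-flat K\"ahler, a finite $L^2$-curvature bound from Chern--Gauss--Bonnet together with the scalar bound, Euclidean volume growth from $\kappa$-noncollapsing plus Bishop--Gromov, and then the ALE conclusion via Anderson/Bando--Kasue--Nakajima. The divisor argument is also the same idea; the paper actually proves the slightly stronger statement that $\int_C\omega_\infty=0$ for \emph{every} closed $2$-submanifold $C$ (not only holomorphic ones), using Wirtinger to bound $|\int_{C_i}\omega_i|$ by the area and thereby forcing the integer $a_i=\int_{C_i}c_1(M)$ to vanish for large $i$. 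Your variant, using near-calibration of $\psi_i(D)$ to get strict positivity and hence a lower bound of one unit, is enough for the theorem as stated.

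There is one genuine omission. You assert that $X_\infty$ is a \emph{finite} quotient of its universal cover, but nothing in your argument up to that point rules out infinite $\pi_1(X_\infty)$: the holonomy reduction to $SU(2)$ only tells you the universal cover is hyperk\"ahler, and the ALE structure at infinity controls $\pi_1$ of the end, not of the total space. The paper treats this as a separate step (following Anderson), arguing via the Cheeger--Gromoll splitting theorem: if $\pi_1$ were infinite, minimizing geodesics between distinct preimages of a basepoint would produce a line in the universal cover, forcing an isometric $\R$-factor and hence flatness, contradicting $|Rm|(x_\infty)=1$. You should insert this argument---most naturally after you have established that $X_\infty$ is a non-flat Ricci-flat ALE space---before concluding that $X_\infty$ is a finite quotient of a hyperk\"ahler ALE manifold; without it the passage from ``universal cover hyperk\"ahler'' to ``finite quotient of hyperk\"ahler ALE'' is unjustified.
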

  Part of this theorem was obtained independently in~\cite{RZZ}.

  On one hand, $X_{\infty}$ has particular topology and geometry.
  On the other hand, all the topology and geometry of $X_{\infty}$
  come out from the underlying manifolds.
  Therefore, on a Fano surface with simple topology and geometry,
  it's plausible that the Riemannian curvature is uniformly bounded.
  Actually, we can prove the following theorem.

 \begin{theoremin}
  Suppose $\{(M,g(t)), 0 \leq t < \infty\}$ is a \KRf solution on a
  toric Fano surface.  If the initial metric is toric symmetric,
  then the Riemannian curvature is uniformly bounded along this
  flow.
 \label{theoremin: toric}
 \end{theoremin}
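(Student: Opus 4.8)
The plan is to argue by contradiction, feeding the toric symmetry into the bubbling picture supplied by Theorem~\ref{theoremin: bubble}. Suppose the Riemannian curvature is \emph{not} uniformly bounded along the flow. Choose times $t_i\to\infty$ and points $p_i\in M$ with $Q_i:=\max|Rm|(\cdot,t_i)=|Rm|(p_i,t_i)\to\infty$, rescale the metrics by $Q_i$ and recenter at $p_i$. By Perelman's non-collapsing together with Theorem~\ref{theoremin: bubble}, a subsequence converges to a deepest bubble $X_\infty$ which is a finite quotient of a hyperk\"ahler ALE $4$-manifold and which contains no compact divisor. The normalization $|Rm|(p_\infty)=1$ at the base point forces $X_\infty$ to be \emph{nonflat}.

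The first substantive step is to carry the symmetry into the limit. Since the \KRf is canonical, it preserves the group of holomorphic isometries of the initial data; hence the real torus $T^2\subset\Aut(M,J)$ acts on each rescaled flow by $J$-holomorphic isometries, and the limiting data $(J_\infty,h_\infty)$ are invariant under the limit group $G_\infty=\lim \Psi_i\, T^2\, \Psi_i^{-1}$, where $\Psi_i$ are the rescaling diffeomorphisms. On a smooth toric surface the $T^2$-fixed set is a finite set of points, and the curvature function is $T^2$-invariant. I would first reduce to the case that, after rescaling, $p_i$ converges to one of these fixed points $q$. In that case the isotropy representation of $T^2$ at $q$ is a maximal torus of $U(2)$, so the entire rank-two torus survives: $G_\infty\supset T^2$ acts on $X_\infty$ by holomorphic isometries fixing $p_\infty$. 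Thus $X_\infty$ is a nonflat \emph{toric} finite quotient of a hyperk\"ahler ALE surface with no compact divisor.

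The second step is classification. By Kronheimer's list, a hyperk\"ahler ALE $4$-manifold is the minimal resolution of $\C^2/\Gamma$ with $\Gamma\subset SU(2)$ finite, and those admitting a holomorphic $T^2$-action fixing a point are exactly the $A_k$ (Gibbons--Hawking) spaces, the resolutions of $\C^2/\Z_{k+1}$. For these the complex structure making $T^2$ act holomorphically is the resolution complex structure, whose exceptional set is a $T^2$-invariant chain of compact holomorphic $(-2)$-curves; since $J_\infty$ is the $T^2$-invariant complex structure, this chain is a compact divisor in $X_\infty$, and its image persists as a compact divisor after the finite quotient. This contradicts the no-compact-divisor conclusion of Theorem~\ref{theoremin: bubble}, unless $\Gamma$ is trivial, in which case $X_\infty$ is flat, contradicting $|Rm|(p_\infty)=1$. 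Either way we reach a contradiction, so the curvature must be uniformly bounded along the flow; Theorem~\ref{theoremin: bounded} then promotes this to convergence to a K\"ahler Ricci soliton.

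The main obstacle is precisely the reduction in the second paragraph: guaranteeing that the blow-up centers can be taken at a $T^2$-fixed point and, equivalently, that the \emph{full} rank-two holomorphic symmetry survives in the bubble. This is essential, because holomorphicity of a rank-two torus is exactly what pins $J_\infty$ to the resolution complex structure and hence forces the compact divisor; if the symmetry degenerated to a single $S^1$ (or to an abelian rotation-plus-translation group), the bubble could a priori carry a divisor-free complex structure on a hyperk\"ahler ALE space, for instance the affine-quadric complex structure on the Eguchi--Hanson space, which the divisor count alone cannot exclude. I would handle this by using $T^2$-invariance of the curvature to relocate the concentration onto the finite fixed-point set, and by tracking the isotropy weights to verify that the torus acts holomorphically and effectively on the limit, paying additional attention to the orbifold points introduced by the finite quotient.
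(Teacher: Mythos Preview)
Your overall strategy---contradiction, extract a deepest bubble via Theorem~\ref{theoremin: bubble}, pass the toric symmetry to the limit, and then confront the resulting compact holomorphic curve with the no-compact-divisor conclusion---is exactly the paper's. The difference lies in how the compact curve in the bubble is produced.

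The paper does not go through Kronheimer's list or through identifying $J_\infty$ among the hyperk\"ahler $S^2$ of complex structures. Instead it invokes Proposition~16 of~\cite{CLW}: for a pointed Cheeger--Gromov limit of toric-symmetric K\"ahler metrics, the limit is again toric, there is a moment-map Morse function on $X$ with only even-index critical points (so $X$ is simply connected with $b_2(X)>0$), and $H_2(X)$ is generated by holomorphically embedded $\CP^1$'s. One such $\CP^1$ is then the compact divisor that contradicts Proposition~\ref{proposition: noholomorphic}. This packages both the survival of the full $T^2$ \emph{and} the production of a holomorphic curve into a single black box, so the paper's proof of Theorem~\ref{theoremin: toric} is only a few lines.

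Your route via Kronheimer is viable in principle, but the obstacle you flag is genuine and is not resolved by the sketch you give. The claim that one may ``relocate the concentration onto the finite fixed-point set'' because $|Rm|$ is $T^2$-invariant is not justified: $T^2$-invariance only says the maximum is attained along an orbit, which could be a principal $2$-torus. In that case the orbit diameter is $O(1)$ in the original metric, hence $O(Q_i^{1/2})\to\infty$ after rescaling, and the $T^2$-action degenerates to translations in the limit rather than a holomorphic torus fixing $p_\infty$; you would then be stuck exactly in the scenario you describe (an Eguchi--Hanson space carrying the divisor-free affine-quadric complex structure). The paper sidesteps this entirely by appealing to the moment-map/Morse argument of~\cite{CLW}, which handles the passage of the toric structure to the limit without needing the blow-up center to be a fixed point and without ever invoking Kronheimer.
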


  This result is not new.  In \cite{Zhu}, Zhu showed that staring
  from any toric symmetric metric, \KRf will converge to a KRS metric
  on every toric manifold.
  He used complex Monge-Ampere equation theory and reduce the
  convergence of \KRf to the control of $C^0$-norm of
  Ricci potential functions on $M$.  Using the toric condition, he
  is able to obtain the required $C^0$-estimates.
  Since Theorem~\ref{theoremin: toric} only concerns toric Fano surfaces, it's not surprising that our
  proof is simpler and more geometric.   In fact, toric Fano
  surfaces with toric symmetric metrics admit almost the simplest
  topology and geometry one can imagine for Fano surfaces.  Because
  of this simplicity,  the formation of deepest bubbles is prevent,
  therefore the Riemannian curvature must be bounded along the flow.  This ruling-out-bubble idea originates from \cite{CLW}.
 However, in \cite{CLW}, explicit energy bounds are calculated to exclude
 deepest bubbles.  This calculation is avoided in our
 cases and we can exclude deepest bubbles directly by topological
 and geometric obstructions.\\

 As the combination of Theorem~\ref{theoremin: bounded} and
 Theorem~\ref{theoremin: toric}, we know every toric Fano surface
 admits a KRS metric.  Recall that a KRS metric becomes a KE
 metric if and only if the Futaki invariant of the underlying
 manifold vanishes. By the classification of Fano surfaces, every
 toric Fano surface must be one of the following types: $\PxP, \CP^2, \Blow{k}(1 \leq k \leq 3)$.
 Only $\Blow{}$ and $\Blow{2}$ have nonvanishing Futaki invariants.
 Therefore, we have
 \begin{corollaryin}
   There exist nontrivial KRS metrics on $\Blow{}$ and $\Blow{2}$.
   There exist KE metrics on $\PxP$, $\CP^2$ and $\Blow{3}$.
 \end{corollaryin}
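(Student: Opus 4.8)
The plan is to read off the corollary as a bookkeeping consequence of the two existence theorems, combined with the classification of toric Fano surfaces and the standard computation of Futaki invariants. First I would chain Theorem~\ref{theoremin: toric} into Theorem~\ref{theoremin: bounded}: on any toric Fano surface, choosing a toric symmetric initial metric, Theorem~\ref{theoremin: toric} supplies a uniform Riemannian curvature bound along the \KRf, and then Theorem~\ref{theoremin: bounded} produces a subsequential smooth limit $(M,h)$ that is a K\"ahler Ricci soliton compatible with the original complex structure $J$. Hence every toric Fano surface carries a KRS metric.

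Next I would invoke the classification: the smooth toric Fano surfaces are exactly $\CP^2$, $\PxP$, and the blowups $\Blow{k}$ for $1 \le k \le 3$ (del Pezzo surfaces of degrees $9,8,8,7,6$). By the previous step each of these five admits a KRS metric $h$, with soliton vector field $X = \nabla^{1,0} f$ determined by $Ric_h - h = \mathscr{L}_{\nabla f} h$.

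Now I would use the dichotomy that a KRS is a KE metric precisely when the Futaki invariant of the underlying surface vanishes. One direction is Futaki's obstruction ($Ric_h = h$ forces the invariant to vanish); for the converse I would rely on the Tian--Zhu uniqueness of the soliton vector field, which forces $X=0$, hence $f$ constant and $Ric_h = h$, whenever the Futaki invariant is zero. It then remains to identify the invariants surface by surface: on $\CP^2$ and $\PxP$ the symmetric homogeneous KE metrics already exhibit vanishing Futaki invariant, and $\Blow{3}$ likewise has vanishing invariant and carries a KE metric; by contrast $\Blow{}$ and $\Blow{2}$ have non-reductive automorphism groups, so by Matsushima's theorem they admit no KE metric and their Futaki invariants are nonzero. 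Feeding this into the dichotomy, the KRS on $\Blow{}$ and $\Blow{2}$ must have $X \neq 0$, i.e.\ it is nontrivial, while on $\CP^2$, $\PxP$ and $\Blow{3}$ the KRS is forced to be a genuine KE metric, which is exactly the assertion.

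I expect no serious analytic obstacle inside the corollary itself --- all the hard flow analysis sits in Theorems~\ref{theoremin: bounded} and~\ref{theoremin: toric}, which I am allowed to assume. The only delicate points are conceptual: making precise the equivalence ``KRS is KE $\iff$ Futaki invariant $=0$'' (which I would justify via the Tian--Zhu uniqueness of the soliton field rather than recompute any integral), and confirming the nonvanishing of the Futaki invariants on $\Blow{}$ and $\Blow{2}$, for which I would cite the Matsushima reductivity obstruction rather than evaluate the defining integral directly.
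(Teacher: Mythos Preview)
Your proposal is correct and follows essentially the same route as the paper: combine Theorem~\ref{theoremin: toric} with Theorem~\ref{theoremin: bounded} to produce a KRS on each toric Fano surface, then invoke the classification together with the dichotomy ``KRS is KE $\iff$ Futaki invariant vanishes'' to separate $\Blow{}$, $\Blow{2}$ from $\CP^2$, $\PxP$, $\Blow{3}$. The paper simply asserts the Futaki dichotomy and the (non)vanishing on each surface, whereas you supply the extra justification via Tian--Zhu uniqueness and Matsushima's reductivity obstruction; this is a harmless elaboration rather than a different argument.
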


 This result is well known although our proof is new.  The existence of KRS on
  $\Blow{}$ was first proved by Koiso (\cite{Ko}).  The existence of KE metric
  on $\Blow{3}$ was first proved by Tian and Yau(\cite{TY}).     For
  a general toric manifold, the existence of KRS metric was proved by
  X. Wang and Zhu (\cite{WZ}).\\

 The organization of this note is as follows: In section 2, we setup
 the basic notations.  Then we prove
 Theorem~\ref{theoremin: bounded}, Theorem~\ref{theoremin: bubble} and
 Theorem~\ref{theoremin: toric} in section 3,  section 4 and  section 5
 respectively.

 {\bf Acknowledgments:}   The first named author wants to thank S. K.
 Donaldson, G. Tian for discussions in related topics. The second
 author wants to thank G. Tian, J. Cheeger, J. Lott, K. Grove, B.
 Chow for their
 interest in this work.\\

 \section{Setup of notations}

 Let $(M,g,J)$ be an $n$-dimensional compact complex manifold with
 Riemannian metric $g$, $\omega$ be the form compatible with $g$
 and $J$.  $(M,g,J)$ is K\"ahler if and only if $\nabla J \equiv 0$.
 This holds if and only if $\omega$ is a positive closed
 $(1,1)$-form. We'll only study K\"ahler manifold in this note.
  In local complex coordinates $\{z_1, \cdots, z_n\}$, the metric form
 $\omega$ is of the form
 \[
 \omega = \sqrt{-1} \displaystyle \sum_{i,j=1}^n\;g_{i \overline{j}}
 d\,z^i\wedge d\,z^{\overline{j}}  > 0,
 \]
 where $\{g_{i\overline {j}}\}$ is a positive definite Hermitian
 matrix function. The K\"ahler condition requires that $\omega$ is a
 closed positive (1,1)-form.  Given a K\"ahler metric $\omega$, the curvature tensor
 is
 \[
  R_{i \overline{j} k \overline{l}} = - {{\partial^2 g_{i \overline{j}}} \over
 {\partial z^{k} \partial z^{\overline{l}}}} + \displaystyle
 \sum_{p,q=1}^n g^{p\overline{q}} {{\partial g_{i \overline{q}}}
 \over {\partial z^{k}}}  {{\partial g_{p \overline{j}}} \over
 {\partial z^{\overline{l}}}}, \qquad\forall\;i,j,k,l=1,2,\cdots n.
 \]
 The Ricci curvature form is
 \[
  {\rm Ric}(\omega) = \sqrt{-1} \displaystyle \sum_{i,j=1}^n \;R_{i \overline{j}}(\omega)
 d\,z^i\wedge d\,z^{\overline{j}} = -\sqrt{-1} \partial
 \overline{\partial} \log \;\det (g_{k \overline{l}}).
 \]
 It is a real, closed (1,1)-form. Recall that $[\omega]$ is called a
 canonical K\"ahler class if this Ricci form is cohomologous to
 $\omega,\; $, i.e., $[Ric]=[\omega]$.\\

    Now we assume that the first Chern class $c_1(M)$ is positive.
 The normalized Ricci flow equation(c.f. \cite{Cao85}) in the canonical class
 of $M$ is
 \begin{equation}
  {{\partial g_{i \overline{j}}} \over {\partial t }} = g_{i \overline{j}}
  - R_{i \overline{j}}, \qquad\forall\; i,\; j= 1,2,\cdots ,n.
 \label{eqn: kahlerricciflow}
 \end{equation}
 It follows that on the level of K\"ahler potentials,
 the flow becomes
 \begin{equation}
   {{\partial \varphi} \over {\partial t }} =  \log {{\omega_{\varphi}}^n \over {\omega}^n } + \varphi - h_{\omega} ,
 \label{eqn: flowpotential}
 \end{equation}
 where $h_{\omega}$ is defined by
 \[
   {\rm Ric}(\omega)- \omega = \st \partial \overline{\partial} h_{\omega}, \; {\rm and}\;\displaystyle \int_X\;
   (e^{h_{\omega}} - 1)  {\omega}^n = 0.
 \]
  Along \KRF, the evolution equations of curvatures are listed in
  Table~\ref{table: cevformula}.

 \begin{table}[h]
 \begin{center}
 \begin{tabular}[b]{|c @{} l|}
 \hline
 $\displaystyle \D{}{t}R_{i \overline{j} k
 \overline{l}}$ & =$\displaystyle \triangle R_{i \overline{j} k \overline{l}} +
 R_{i \overline{j} p \overline{q}} R_{q \overline{p} k \overline{l}}
 - R_{i \overline{p} k \overline{q}} R_{p \overline{j} q
 \overline{l}} + R_{i
 \overline{l} p \overline{q}} R_{q \overline{p} k \overline{j}} + R_{i \overline{j} k
 \overline{l}}$\\
  &\quad $-{1\over 2} \left( R_{i \overline{p}}R_{p \overline{j} k
 \overline{l}}  + R_{p \overline{j}}R_{i \overline{p} k \overline{l}}
 + R_{k \overline{p}}R_{i \overline{j} p \overline{l}} + R_{p
 \overline{l}}R_{i \overline{j} k \overline{p}} \right)$.\\
 $\displaystyle \D{}{t} R_{i\bar{j}}$  &  =$\displaystyle \triangle
  R_{i\bar j} + R_{i\bar j p \bar q} R_{q \bar p} -R_{i\bar p} R_{p \bar j}$.\\
 $\displaystyle \D{}{t} R$ & =$\displaystyle \triangle R + R_{i\bar j} R_{j\bar i}- R$.\\
 \hline
 \end{tabular}
 \end{center}
 \caption{Curvature evolution equations along \KRF}
 \label{table: cevformula}
 \end{table}
 One shall note that the Laplacian operator appears in the above
 formulae is the Laplacian-Beltrami operator on functions. As usual,
 the flow equation (\ref{eqn: kahlerricciflow})
  or (\ref{eqn: flowpotential}) is referred as the K\"ahler Ricci flow on $M$. It is
 proved by Cao \cite{Cao85}, who followed Yau's celebrated work
 \cite{Yau78}, that the K\"ahler Ricci flow exists globally for
 any smooth initial K\"ahler metric.\\

 In his unpublished work, Perelman obtained some deep estimates along
 K\"{a}hler Ricci flow on Fano manifolds. The detailed proof can be
 found in Sesum and Tian's note~\citet{ST}.
  \begin{proposition}[Perelman, c.f.~\cite{ST}]
 Suppose $\{(M^n, g(t)), 0 \leq t < \infty \}$ is a \KRf solution. There
 are two positive constants $D, \kappa$ depending only on this flow such that the
 following two estimates hold.
 \begin{enumerate}
 \item  Let $R_{g_t}$ be the scalar curvature under metric $g_t$,
 $h_{\omega_{\varphi(t)}}$ be the Ricci potential of form $\omega_{\varphi(t)}$ satisfying $\frac{1}{V} \int_M e^{h_{\omega_{\varphi(t)}}} \omega_{\varphi(t)}^n=1$. Then we have
 \begin{align*}
      \norm{R_{g_t}}{C^0} + \diam_{g_t} M +
      \norm{h_{\omega_{\varphi(t)}}}{C^0} + \norm{\nabla h_{\omega_{\varphi(t)}}}{C^0} < D.
 \end{align*}
 \item   $ \displaystyle
      \frac{\Vol(B_{g_t}(x, r))}{r^{2n}} > \kappa$ for every
       $r \in (0, 1)$, $(x, t) \in M \times [0, \infty)$.
 \end{enumerate}
 \label{proposition: perelman}
 \end{proposition}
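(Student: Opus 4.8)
The statement is Perelman's, so the plan is to follow his entropy method (in the write-up of Sesum--Tian). The central object is Perelman's $\mathcal{W}$-entropy, which for a real $2n$-dimensional manifold reads
\[
\mathcal{W}(g,f,\tau)=\int_M\left[\tau\left(R+|\nabla f|^2\right)+f-2n\right](4\pi\tau)^{-n}e^{-f}\,dV,
\]
together with the invariant $\mu(g,\tau)=\inf_f\mathcal{W}(g,f,\tau)$, the infimum taken over $f$ normalized by $(4\pi\tau)^{-n}\int_M e^{-f}\,dV=1$. First I would record the evolution of the normalized Ricci potential $h$: from $\partial_t\omega=-\st\ddb h$ and the normalization $\frac1V\int_M e^{h}\omega^n=1$ one computes $\partial_t h=\Lap h+h-a(t)$ with $R=\Lap h+n$, where $a(t)$ is the time-dependent constant enforcing the normalization. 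The second ingredient is Perelman's monotonicity formula: coupling the flow to the conjugate heat equation $\partial_t f=-\Lap f+|\nabla f|^2-R+n$ makes (the normalized version of) $\mathcal{W}$ non-decreasing, its derivative being a non-negative integral of $|R_{ij}+\nabla_i\nabla_j f-\frac{1}{2\tau}g_{ij}|^2$ against $e^{-f}\,dV$.

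For the non-collapsing statement (2) I would argue exactly as Perelman does: monotonicity plus finiteness of $\mu(g(0),\tau)$ for $\tau$ in a bounded range yields a uniform lower bound $\mu(g(t),\tau)\ge -C$ for all $t$ and all $\tau\in(0,1]$. Such a uniform lower bound on $\mu$ is equivalent to a uniform log-Sobolev inequality, and plugging a cutoff function supported on a ball $B_{g_t}(x,r)$ into $\mathcal{W}$ then forces the volume ratio lower bound $\Vol(B_{g_t}(x,r))>\kappa\,r^{2n}$ for $r\le 1$.

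For (1), the lower bound on $R$ is the easy part: by the evolution equation $\partial_t R=\Lap R+R_{i\bar j}R_{j\bar i}-R$ from Table~\ref{table: cevformula} and the pointwise inequality $R_{i\bar j}R_{j\bar i}\ge \frac1n R^2$, the minimum principle gives $\frac{d}{dt}R_{\min}\ge R_{\min}(\frac1n R_{\min}-1)$, so $R_{\min}$ cannot escape to $-\infty$. The delicate estimates are the upper bound on $R$ and the bounds on $\norm{h}{C^0}$ and $\norm{\nabla h}{C^0}$: the reaction term $R_{i\bar j}R_{j\bar i}$ is not controlled pointwise, so the maximum principle alone fails. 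Here I would feed the Ricci potential $h$ itself into the monotone functional; this forces $\int_M|\nabla h|^2e^{-h}\omega^n$ to stay uniformly bounded, an integral gradient estimate for $h$. Combining this with the uniform Sobolev inequality coming from the $\mu$-bound and running a Moser iteration on the heat equation for $h$ upgrades the integral control to the pointwise bounds $\norm{h}{C^0}+\norm{\nabla h}{C^0}\le C$; the scalar curvature upper bound and the diameter bound then follow, the former from $R=n+\Lap h$ together with parabolic estimates, the latter from the bounded oscillation and bounded gradient of $h$ combined with the $\kappa$-non-collapsing of part (2), which rules out long thin necks.

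The main obstacle is exactly this last block: unlike the lower bound on $R$, the upper bound on $R$ and the $C^0$-control of the Ricci potential cannot be had from the maximum principle and genuinely require the entropy monotonicity. The most delicate technical point is the coupling between the conjugate-heat solution $f$ realizing $\mu$ and the actual Ricci potential $h$ --- one must show they stay comparable so that the monotone quantity really controls $h$ --- and the iteration step, which needs the uniform Sobolev/non-collapsing input to be in place first.
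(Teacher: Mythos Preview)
The paper does not prove this proposition at all: it attributes the result to Perelman's unpublished work and refers the reader to the Sesum--Tian note~\cite{ST} for the detailed argument. So there is nothing in the paper to compare your proposal against; any assessment has to be against the Perelman/Sesum--Tian argument itself.

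Compared with that argument, your sketch is correct in spirit for the non-collapsing part~(2) and for the lower bound on $R$, but the route you propose for the remaining bounds in~(1) diverges from Perelman's, and the order of dependencies is inverted in a way that creates a gap. In the Sesum--Tian write-up, the pointwise estimates $|\nabla u|^2 \le C(u+C)$ and $R \le C(u+C)$ (with $u=-h$ suitably normalised) are obtained by the \emph{maximum principle} applied to quantities like $|\nabla u|^2/(u+2C)$ and $R/(u+2C)$, not by entropy plus Moser iteration. The $\mathcal W$-functional is then used \emph{directly} to bound the diameter: one inserts a test function supported on an annulus far along a putative long thin neck and shows this would drive $\mu$ to $-\infty$, contradicting monotonicity. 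Only after the diameter is bounded does one close the loop and obtain the uniform $C^0$ bounds on $u$, $|\nabla u|$ and $R$.

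Your plan instead tries to get $\norm{h}{C^0}+\norm{\nabla h}{C^0}$ first via Moser iteration and deduce the diameter bound afterwards. Two concrete issues: (i) the $\mu$-lower bound gives a uniform \emph{log-Sobolev} inequality, and turning this into the full Sobolev inequality needed for standard Moser iteration uniformly along the flow is itself nontrivial (later work of Q.~Zhang and Ye addresses this, but it is not part of Perelman's original argument); (ii) without the diameter bound already in hand, your iteration lacks the geometric input (volume comparison on all relevant scales) to run, so the scheme risks circularity. The ``coupling between $f$ and $h$'' you flag is not actually the crux in Perelman's proof; the crux is the annulus test-function argument for the diameter, which your outline omits.
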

 These fundamental estimates will be used essentially in our
 arguments.

 \section{Convergence of \KRf on Fano Surfaces if Riemannian Curvature is Uniformly Bounded}
 In this section, we will prove Theorem~\ref{theoremin: bounded}.

 \begin{proof}
     Since the Riemannian curvature is uniformly bounded, diameter
     is uniformly bounded. By the
     compactness theorem of Ricci flow, we know that
     for every sequence
     of times $t_i \to \infty$, there is a subsequence $t_{i_k} \to \infty$
     such that
  \begin{align*}
     \{(M, g(t+t_{i_k})),  -t_{i_k} < t \leq 0\} \longrightarrow \{(\hat{M}, \hat{g}(t)), -\infty < t \leq
     0\}
  \end{align*}
  in Cheeger-Gromov sense.  In particular, $(M, g(t_{i_k}))$
  converges to $(\hat{M}, \hat{g}(0))$ in Cheeger-Gromov sense,
  i.e., there are diffeomorphisms $\varphi_{i_k}: \hat{M} \to M$
  such that
  \begin{align*}
      \varphi_{i_k}^*(g(t_{i_k})) \sconv \hat{g}(0).
  \end{align*}
   For the simplicity of notations,  we will rewrite this as
  $\varphi_{i}^*(g(t_{i})) \sconv \hat{g}$.
  Note that on $\hat{M}$, complex structure
  $J_{i}=(\varphi_i)_{*}^{-1} \circ J \circ (\varphi_i)_{*}$
  is compatible with $\varphi_i^* g(t_i)$ and it
  satisfies $\nabla_{\varphi_i^* g(t_i)} J_i  \equiv 0$. Taking limit, we
  have $J_i \to \hat{J}$ and $\nabla_{\hat{g}}\hat{J} \equiv 0$.
  This means that $(\hat{M}, \hat{g}, \hat{J})$ is a K\"ahler manifold.
  By the monotonicity of Perelman's $\mu$-functional, we can argue that
   $(\hat{M}, \hat{g})$ is a KRS as Natasa Sesum did in~\cite{Se}.
  On a KRS,
 $Ric_{\hat{g}} - \hat{g}= \mathscr{L}_{\nabla f} \hat{g}$ for some
 smooth function $f$.
  Clearly,
  $[\hat{R}_{i \bar{j}}] = [\hat{g}_{i \bar{j}}] >0$ and it
  implies that
  \begin{align*}
         2\pi c_1(\hat{M})^2= \Vol_{\hat{g}}(\hat{M})= \lim_{i \to
         \infty} \Vol_{g(t_i)} (M)= 2\pi c_1^2(M).
    \end{align*}
 It follows that $(\hat{M}, \hat{g})$ is a Fano surface satisfying
 $c_1^2(\hat{M})=c_1^2(M)$ and  $\hat{M}$ is diffeomorphic to
 $M$.

   By classification of Fano surfaces, every Fano surface $M$ must
   satisfy $1 \leq c_1^2(M) \leq 9$. Now we discuss cases by the values of
   $c_1^2(M)$.

 \begin{enumerate}
 \item  $9 \geq c_1^2(M)=c_1^2(\hat{M}) \geq 5$.      In this case, $M$ is diffeomorphic
 to one of $\PxP$, $\CP^2$ or $\Blow{k}(1 \leq k \leq 4)$.   By
 classification of Fano surfaces, under each of such diffeomorphic
 structure, there is only one complex structure such that $M$
 becomes Fano.     Let $\psi$ be a diffeomorphism map $\psi: M \to \hat{M}$.
 Then both $(M, J)$ and $(M, (\psi)_{*}^{-1} \circ \hat{J} \circ \psi_*)$
 are Fano manifolds with $9 \geq c_1^2(M) \geq 5$.   Therefore, we
 have $J= (\psi)_{*}^{-1} \circ \hat{J} \circ \psi_*$. Recall that
 we have
 \begin{align*}
 \varphi_i^*(g(t_i)) \sconv \hat{g}, \quad
 (\varphi_i)_{*}^{-1} \circ J \circ (\varphi_i)_{*} \sconv \hat{J}.
 \end{align*}
 It follows that
 \begin{align*}
     (\varphi_i \circ \psi)^* (g(t_i))  \sconv h= \psi^* \hat{g},
     \quad
      (\varphi_i \circ \psi)_*^{-1} \circ J \circ (\varphi_i \circ
      \psi) \sconv J=(\psi)_*^{-1} \circ \hat{J} \circ (\psi)_{*}.
 \end{align*}
 Let $\Psi_i= \varphi_i \circ \psi$, $h=\psi^* \hat{g}$,
  we finish the proof of
 Theorem~\ref{theoremin: bounded} whenever $9 \geq c_1^2(M) \geq
 5$.

 \item $4 \geq c_1^2(M)=c_1^2(\hat{M}) \geq 1$. In this case, $M$ is diffeomorphic
 to one of $\Blow{k}(5 \leq k \leq 8)$.     Under each of such
 diffeomorphic structure, there are a lot of complex structures $\tilde{J}$
 such that every $(M, \tilde{J})$ is a Fano surface.  Therefore it is
 possible that $J \neq (\psi)_*^{-1} \circ \hat{J} \circ (\psi)_*$
 and the previous argument fails.
 However, in this case, $\hat{M}$ doesn't admit any
 holomorphic vector fields. This forces the KRS metric on $\hat{M}$ to be a
 KE metric. Moreover, the first eigenvalue of $(\hat{M}, \hat{g})$ is strictly greater
 than $1$ (c.f. Lemma6.3 of~\cite{CT1}).   Note that first eigenvalues converge as the Riemannian
 manifolds converge in Cheeeger-Gromov sense,  a contradiction argument shows that the
 first eigenvalue of
 $(M, g(t))$ is strictly greater than $1$, i.e., there is a positive
 constant $\delta$ such that
 \begin{align*}
    \lambda_1(-\triangle_{g(t)}) \geq  1+ \delta,  \quad \forall \;
    t  \in [0, \infty).
 \end{align*}

    Then using the first part of the proof of
 Proposition 10.2 of~\cite{CT1}, we are able to prove that
 \begin{align*}
 \frac{1}{V}\int_M (\dot{\varphi} - c(t))^2 \omega_{\varphi}^2 < C_{\alpha}e^{-\alpha t}
 \end{align*}
 where $c(t)= \frac{1}{V} \int_M \dot{\varphi} \omega_{\varphi}^2$,
 $\alpha$ is some positive number.  As Riemannian curvature is
 uniformly bounded, we have a uniform Sobolev constant along this
 flow.   As in~\cite{CT1}, a parabolic Moser iteration
 then implies that $\dot{\varphi}$ is exponentially
 decaying, i.e.,
 \begin{align*}
     \dot{\varphi} \leq C_1 e^{-\alpha t}.
 \end{align*}
 The right hand side is an integrable function on $[0, \infty)$. It
 follows that $\varphi$ is uniformly bounded along the flow. By virtue of
 Yau's estimate, we can show that all higher
 derivatives of $\varphi$ are uniformly bounded in a fixed gauge.
 In particular, for every $t_i \to \infty$, we have subsequence
 $t_{i_k}$ such that
 \begin{align*}
     \omega + \st \varphi_{\alpha \bar{\beta}}(t_{i_k})   \to \omega
     + \st \varphi_{\alpha \bar{\beta}}(t_{\infty})
 \end{align*}
 for some smooth function $\varphi(t_{\infty})$ and $(M, \omega + \st \varphi_{\alpha \bar{\beta}}(t_{\infty}))$
 is a KE metric.  This argument is standard. Readers are referred
 to~\cite{CT1},~\cite{CT2},~\cite{PSSW1},~\cite{PSSW2},~\cite{CW}
 for more details.

  Let $\Psi_{i} \equiv id, h=\hat{g}$. We prove Theorem~\ref{theoremin: bounded}
 whenever $4 \geq c_1^2(M) \geq 1$.
 \end{enumerate}

 \end{proof}

\section{Properties of Deepest Bubbles}
 In this section, we discuss the behavior of \KRf on Fano surfaces at maximal
 curvature points whenever the Riemannian curvature is not uniformly
 bounded along the flow. As a corollary, we prove
 Theorem~\ref{theoremin: bubble}.

 Let $\{(M,g(t)), 0 \leq t < \infty \}$ be a K\"{a}hler Ricci flow
 solution on a Fano surface $M$. If the Riemannian curvature is not
 uniformly bounded, then there is a sequence of points $(x_i,t_i)$
 satisfying $Q_i=|Rm|_{g(t_i)}(x_i) \to \infty$ and
 \begin{align*}
 |Rm|_{g(t)}(x) \leq Q_i, \; \forall \; (x,t) \in M \times [0, t_i].
 \end{align*}
 After rescaling, we have
 \begin{align*}
 |Rm|_{g_i(t)}(x) \leq 1, \; \forall \; (x,t) \in M \times
 [-Q_it_i,0],
 \end{align*}
 where $g_i(t) = Q_i g(Q_i^{-1}t + t_i)$.
 Shi's estimate(~\cite{Shi1}, \cite{Shi2}) along Ricci flow implies that all the derivatives of
 Riemannian curvature are uniformly bounded.
 Therefore, we have the convergence
 \begin{align*}
 \{(M, x_i,  g_i(t)), -\infty < t \leq 0 \}
 \stackrel{C^{\infty}}{\to} \{(X, x_\infty, g_\infty(t)),
 -\infty < t \leq 0 \}.
 \end{align*}
 Here $C^{\infty}$ means that this convergence is in the Cheeger-Gromov sense.
 In particular, we have
 $ (M, x_i,  g_i(0)) \stackrel{C^{\infty}}{\to} (X, x_\infty,  g_\infty(0))$.

 Every $g_i(t)$ satisfies the following conditions
 \begin{align*}
     \D{g_i}{t} =  Q_i^{-1}g_i(t) - Ric_{g_i(t)}, \quad
     \sup_{M \times [-Q_it_i, 0]}  |R_{g_i(t)}(x)| \leq CQ_i^{-1}.
 \end{align*}
 So the limit flow  $\{(X, x_\infty, g_\infty(t)), -\infty < t \leq 0 \}$
 satisfies the equations
 \begin{align*}
     \D{g_{\infty}}{t} = - Ric_{g_{\infty}(t)}, \quad
     R_{g_{\infty}(t)} \equiv 0.
 \end{align*}
 It is an unnormalized Ricci flow solution, so the scalar curvature
 satisfies the equation  $\triangle R = \frac12 \triangle R +
 |Ric|^2$.   It forces that the limit solution is Ricci flat.

 As every manifold $(M, g_i(0))$ is a K\"ahler manifold with complex
 structure $J$ satisfying $\nabla_{g_i(0)} J=0$,  there is a limit
 complex structure $J_{\infty}$ such that $\nabla_{g_{\infty}(0)}
 J_{\infty}=0$.   Therefore, $(X, g_{\infty}(0), J_{\infty})$ is a
 Ricci flat K\"ahler manifold.

 For simplicity of notation, we
  denote $g_{\infty}$ as $g_{\infty}(0)$.

 \begin{definition}
   We call such a limit $(X, g_{\infty}, J_{\infty})$ as a deepest
   bubble.
 \end{definition}

 Since Riemannian curvature's $L^2$ norm is a rescaling invariant, we
 have
 \begin{align*}
    \int_{X}  |Rm|_{g_\infty}^2 d\mu_{g_\infty} & \leq \limsup_{i \to
    \infty} \int_{M} |Rm|_{g_i(0)}^2 d\mu_{g_i(0)} \\
    & =  \limsup_{i \to \infty} (\int_{M} |R|_{g_i(0)}^2
    d\mu_{g_i(0)} + C(M))\\
    & =  \limsup_{i \to \infty} (\int_{M} |R|_{g(t_i)}^2
    d\mu_{g(t_i)} + C(M))\\
    & \leq C_0.
 \end{align*}
 In the last step, we use Perelman's estimate that scalar curvature
 is uniformly bounded.    From  the noncollapsing property of this
 flow (Proposition~\ref{proposition: perelman}),
 the limit manifold $(X, g_{\infty})$ must be
 $\kappa$-noncollapsing on all scales, i.e.,
 \begin{align*}
    \frac{\Vol(B(p, r))}{r^4} \geq \kappa>0, \quad \forall \; p \in
    X, \; r>0.
 \end{align*}
 As Ricci curvature is bounded,  this inequality implies that
 $(X, g_{\infty})$ has uniform Sobolev constant.
 Therefore $(X, g_{\infty})$ is a Ricci flat manifold with
 bounded energy and uniform Sobolev constant.   Such a manifold must
 be an an Asymptotically Locally
 Euclidean (ALE) space.  The detailed proof can be found in either~\cite{An89}
  or~\cite{BKN},~\cite{Tian90}.
 So we have the following property.

 \begin{proposition}
   Every deepest bubble is a K\"ahler, Ricci flat Asymptotically Locally Euclidean (ALE)
   space.
 \end{proposition}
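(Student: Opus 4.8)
The plan is to observe that the discussion preceding the statement has already assembled every hypothesis we need: $(X, g_\infty, J_\infty)$ is a complete, Ricci-flat K\"ahler $4$-manifold that is $\kappa$-noncollapsed on all scales, carries a uniform Sobolev inequality, and has finite curvature energy $\int_X |Rm|_{g_\infty}^2\, d\mu_{g_\infty} \leq C_0$. What remains is to extract the ALE structure from these analytic conditions, which is precisely the content of the ALE gap/structure theorems of Anderson, Bando--Kasue--Nakajima and Tian. I would organize the argument in three steps and then invoke these theorems where the genuine difficulty lies.

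First I would set up an $\epsilon$-regularity estimate. Because $g_\infty$ is Ricci-flat, the full curvature tensor satisfies an elliptic equation of the schematic form $\triangle Rm = Rm * Rm$; combined with the uniform Sobolev inequality, a Moser iteration produces a constant $\epsilon_0 > 0$ such that whenever $\int_{B(p,r)} |Rm|^2 < \epsilon_0$ one has the pointwise bound $\sup_{B(p,r/2)} |Rm| \leq C r^{-2} \left( \int_{B(p,r)} |Rm|^2 \right)^{1/2}$. Since the total energy is bounded by $C_0$, only finitely many disjoint balls can carry energy exceeding $\epsilon_0$, so outside a compact set the curvature is pointwise small and decays.

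Second I would identify the asymptotic geometry. As $Ric_{g_\infty} = 0$, Bishop--Gromov volume comparison applies, and together with the $\kappa$-noncollapsing lower bound it forces Euclidean volume growth $\Vol(B(p,r)) \sim r^4$. Rescaling $(X, \lambda^{-2} g_\infty, p)$ and letting $\lambda \to \infty$, the finiteness of the energy guarantees that all curvature escapes into the concentration points, so the pointed Gromov--Hausdorff limit is a metric cone that is flat away from its vertex, hence $\R^4/\Gamma$ for a finite $\Gamma \subset O(4)$ acting freely on $S^3$. Thus $X$ has a single end asymptotic to $\R^4/\Gamma$. On that end, the decay from the first step (sharpened via Ricci-flat scaling) lets one build harmonic coordinates at infinity in which $g_\infty = \delta + O(r^{-\tau})$ with matching decay on derivatives, which is exactly the definition of an ALE space; the K\"ahler and Ricci-flat properties were already verified.

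The step I expect to be the main obstacle is the second one: passing from ``finite energy plus noncollapsing plus Ricci-flat'' to the conclusion that the tangent cone at infinity is exactly the flat quotient $\R^4/\Gamma$, rather than some more general Ricci-flat cone. This hinges on the rigidity that a Ricci-flat cone must be flat and that the cross-section is a spherical space form, together with the freeness of the $\Gamma$-action; these are precisely the points where the cited structure theorems (\cite{An89}, \cite{BKN}, \cite{Tian90}) do the essential work, so I would lean on them here rather than reprove the gap theorem.
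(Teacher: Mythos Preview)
Your proposal is correct and follows essentially the same approach as the paper. The paper's argument is simply the paragraph preceding the proposition: it records that $(X,g_\infty)$ is K\"ahler, Ricci-flat, has finite curvature energy (via Perelman's scalar curvature bound and the scale-invariance of $\int|Rm|^2$), is $\kappa$-noncollapsed, and hence has a uniform Sobolev constant---and then it invokes \cite{An89}, \cite{BKN}, \cite{Tian90} as a black box for the ALE conclusion. You take the same inputs and cite the same references; the only difference is that you unpack what those references actually prove (the $\epsilon$-regularity, Euclidean volume growth, and tangent-cone-at-infinity picture), which the paper does not do.
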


  Moreover, the fundamental group of every deepest bubble must be finite.

 \begin{proposition}
   If $Y^4$ is a non flat ALE space with flat Ricci curvature, then $\pi_1(Y)$ is finite.
 \end{proposition}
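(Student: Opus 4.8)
The plan is to pass to the universal cover and pit the Euclidean volume growth of $Y$ against the Bishop--Gromov volume bound on the cover, in the spirit of Anderson's topological theorem for manifolds of nonnegative Ricci curvature. Write $\pi\colon \tilde Y \to Y$ for the universal covering, let $\Gamma = \pi_1(Y)$ be the deck group acting by isometries, and fix $\tilde p \in \pi^{-1}(p)$. Since flat (hence nonnegative) Ricci curvature is a local condition, $\tilde Y$ is a complete Ricci-flat $4$-manifold, so Bishop--Gromov volume comparison gives the upper bound $\Vol(B_{\tilde Y}(\tilde p, r)) \leq \omega_4 r^4$ for all $r>0$, where $\omega_4$ is the volume of the Euclidean unit ball. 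On the other hand, the ALE hypothesis on $Y$ furnishes a matching lower bound downstairs: the end modeled on $(\R^4 \setminus B)/\Gamma_\infty$ with $\Gamma_\infty$ finite forces Euclidean volume growth, so there are constants $c>0$ and $r_0$ with $\Vol(B_Y(p,r)) \geq c\, r^4$ for all $r \geq r_0$.

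The heart of the argument is a counting estimate tying these two bounds together. Introduce the orbit-counting function $n(r) = \#\{g \in \Gamma : d_{\tilde Y}(\tilde p, g\tilde p) \leq r\}$, which is finite for each $r$ because $\Gamma$ acts properly discontinuously. I would use the Dirichlet fundamental domain $F$ centered at $\tilde p$, so that $\{gF\}_{g\in\Gamma}$ tiles $\tilde Y$ with pairwise disjoint interiors and $\pi|_F$ is a volume-preserving bijection onto $Y$ off a null set. Because $\pi^{-1}(B_Y(p,r))$ is $\Gamma$-invariant, each slab $\pi^{-1}(B_Y(p,r)) \cap gF$ has volume exactly $\Vol(B_Y(p,r))$. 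The key geometric observation is that whenever $d(\tilde p, g\tilde p) \leq r$ this slab lies inside $B_{\tilde Y}(\tilde p, 2r)$: a point of $gF$ has $g\tilde p$ as its nearest orbit point, so if it also lies over $B_Y(p,r)$ it is within distance $r$ of $g\tilde p$, hence within $2r$ of $\tilde p$ by the triangle inequality. Summing the volumes of these $n(r)$ disjoint slabs gives
\begin{align*}
 n(r)\,\Vol(B_Y(p,r)) \leq \Vol(B_{\tilde Y}(\tilde p, 2r)) \leq \omega_4 (2r)^4 .
\end{align*}

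Combining this with the Euclidean lower bound $\Vol(B_Y(p,r)) \geq c\,r^4$ yields $n(r) \leq 16\,\omega_4/c$ for all large $r$, so the counting function stays bounded as $r \to \infty$. Since $n(r)$ increases to $\#\Gamma$ and would diverge if $\Gamma$ were infinite, I conclude that $\Gamma = \pi_1(Y)$ is finite. I expect the only delicate point to be the volume bookkeeping in the counting step: one must check that the slabs $\pi^{-1}(B_Y(p,r)) \cap gF$ are genuinely disjoint, each of full volume $\Vol(B_Y(p,r))$, and contained in the fixed ball $B_{\tilde Y}(\tilde p,2r)$ whenever $g$ is counted by $n(r)$. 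It is worth stressing that mere polynomial volume bounds would not suffice here, since they are compatible with infinite groups such as $\mathbb{Z}^4$ acting on $\R^4$; what makes the argument close is that the ALE condition pins the volume growth of $Y$ to the \emph{same} Euclidean order $r^4$ that Bishop--Gromov enforces upstairs, leaving no room for an infinite deck group. Note that the non-flatness hypothesis plays no role in this finiteness conclusion; it is only the ALE structure and $\mathrm{Ric}\geq 0$ that are used.
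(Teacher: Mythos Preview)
Your argument is correct and is essentially Anderson's volume-comparison proof: Euclidean volume growth downstairs plus Bishop--Gromov upstairs forces a uniform bound on the orbit-counting function and hence on $|\pi_1(Y)|$. The bookkeeping with the Dirichlet domain is fine; the slabs $gF\cap\pi^{-1}(B_Y(p,r))$ indeed have disjoint interiors, each has full volume $\Vol(B_Y(p,r))$, and for $d(\tilde p,g\tilde p)\le r$ they sit inside $B_{\tilde Y}(\tilde p,2r)$ exactly as you say.

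The paper's proof takes a genuinely different route. It argues by contradiction: if $\pi_1(Y)$ were infinite, one strings minimizing geodesics $\gamma_i$ in $\tilde Y$ between orbit points of a base point, and (after translating their midpoints into a fixed compact set) either the lengths diverge, producing a line and forcing $\tilde Y\cong N^3\times\R$ by the Cheeger--Gromoll splitting theorem, hence $\tilde Y$ flat (contradicting non-flatness); or the lengths stay bounded, producing arbitrarily short geodesic lassos through a fixed point of $Y$, which is impossible on a smooth manifold. Thus the paper's argument is geodesic/splitting-theoretic rather than volume-theoretic, and it genuinely consumes the non-flatness hypothesis to dispose of the line case. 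Your approach is cleaner and, as you observe, more general: it needs only $\mathrm{Ric}\ge 0$ and Euclidean volume growth, and makes no use of non-flatness. The paper's approach, on the other hand, avoids the fundamental-domain bookkeeping and gives a more tangible geometric picture of what would go wrong.
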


   This is proved in~\cite{Ant}.   For the simplicity of readers, we
   write down a simple proof here.

   \begin{proof}
      Fix $p \in Y$ and let $\pi: \tilde{Y} \to Y$ be the universal covering map.

     We argue by contradiction.  Suppose $\pi_1(Y)$ is a infinite group,
     then we can find a sequence of points $p_0, p_1, p_2, p_3, \cdots \in \pi^{-1}(p)$.
      Let $\gamma_i$ be the shortest geodesic connecting $p_0$ and
      $p_i$, $q_i$ be the center point of $\gamma_i$, $v_i$ be the
      tangent direction represented by $\gamma_i$ in $T_{q_i}
      \tilde{Y}$.

      Since $Y$ is an ALE space, there is a constant $R>0$ such that $Y$
      is diffeomorphic to $B(p, R)$.  Therefore, every  loop
      $\pi(\gamma_i)$ can be smoothly deformed into a loop in
      $B(p, R)$.  Consequently the shortest distance property of
      $\gamma_i$ assures that $\pi(\gamma_i)$ must locate in
      $\overline{B(p, 2^m R)}$.  As $\overline{B(p, 2^m R)}$ is a compact space,
      we may assume that
      $(\pi(q_i), \pi_*(v_i))$ converges to a point $(q, v) \in TX$.
      Remember $q \in \overline{B(p, 2^m R)}$ and $v$ is a unit vector in
      $T_q Y$.

      Let $(x, u) \in T\tilde{Y}$ such that $(\pi(x), \pi_*(u))=(q, v)$.
      Then there is deck transformation $\sigma_i \in \pi_1(Y)$
      such that $(\sigma_i(q_i), (\sigma_i)_* (v_i))  \to (x, u)$.
      $\sigma_i(\gamma_i)$ is clearly a shortest geodesic connecting
      $\sigma_i(p_0)$ and $\sigma_i(p_i)$ and  $\sigma_i(q_i)$ is the
      center of $\sigma_i(\gamma_i)$. Now there are only two
      possibilities.

\noindent
  \textit{case1. $\displaystyle \limsup_{i \to \infty} |\gamma_i|=|\sigma_i(\gamma_i)|=\infty$ }

    In this case, by taking subsequence if necessary, we can assume that
    $\sigma_i(\gamma_i)$ tends to a line passing
    through $q_0$.  As $\tilde{Y}$ has flat Ricci curvature, it
    splits a line. So $\tilde{Y}= N^3 \times \R$.  $N^3$ must be  Ricci flat
    and therefore Riemannian flat. This implies $\tilde{Y}$
    and $Y$ are flat.   According to the assumption of $Y$, this is
    impossible.

\noindent
   \textit{case2. $\displaystyle |\gamma_i|<C$ uniformly. }

    Since all $p_i$'s
    locate in $\overline{B(p_0, C)} \subset \tilde{Y}$ which is a compact set.
    Therefore for every small $\epsilon>0$, there exists $i, j$ such
    that $d(p_i, p_j)<\epsilon$.  It follows that $Y$ contains a
    geodesic lasso passing through $p$ and its length is less than
    $\epsilon$ no matter how small $\epsilon$ is.   Of course this
    will not happen on a smooth manifold $Y$.\\

  Since both cases will not happen, our assumption must be wrong.
  Therefore $\pi_1(Y)$ is finite.
  \end{proof}

  Let $\tilde{X} \stackrel{\pi}{\rightarrow} X$ be the universal covering.
 Then $(\tilde{X}, \tilde{g}, \tilde{J})$
 is a K\"ahler Ricci flat manifold, where $\tilde{g}=\pi^*(g_{\infty})$, $\tilde{G}=\pi_* \circ J \circ (\pi_*)^{-1}$. So its holonomy group is
 $SU(2)=Sp(1)$.  Therefore, $\tilde{X}$ has a hyper-K\"ahler structure
 by Berger's classification.   Since $\pi_1(X)$ is finite, we have
 \begin{align*}
 \int_{\tilde{X}} |Rm|_{\tilde{g}}^2 d\mu_{\tilde{g}}
 =|\pi_1(X)| \int_X |Rm|_{g_{\infty}}^2 d\mu_{g_{\infty}} < \infty.
 \end{align*}
 It follows that $\tilde{X}$ is an ALE space as
 we argued before.  This means that $\tilde{X}$ is a hyper-K\"ahler
 ALE space.  However, all these hyper K\"ahler ALE spaces has been classified by
 Kroheimer in~\cite{Kr89}.

 \begin{proposition}[Kroheimer]
      Let $\Gamma$ be a finite subgroup of $SU(2)$ and $\pi: M \to \C^2 /
      \Gamma$ be the minimal resolution of the quotient space $\C^2/
      \Gamma$ as a complex variety. Suppose that three cohomology
      classes $\alpha_I, \alpha_J, \alpha_K \in H^2(M;\R)$ satisfy
      the non-degeneracy condition for each $\Sigma \in H_2(M; \Z)$
      with $\sigma \cdot \sigma = -2$, there exists $A \in \{ I, J,
      K\}$ with $\alpha_A(\Sigma) \neq 0$.

 Then there exists an ALE Riemannian metric $g$ on $M$ of order $4$
 together with a hyper K\"{a}hler structure $(I,J,K)$ for which the
 cohomology class of the K\"{a}hler form $[\omega_A]$ determined by
 the complex structure $A$ is given by $\alpha_A$ for all $A \in I,J,K$.
 Conversely every hyper K\"{a}hler ALE 4-manifold of order 4
 can be obtained as above.
  \label{proposition: kroheimer}
 \end{proposition}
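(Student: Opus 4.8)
The plan is to treat the two directions separately: the existence of the metric via Kronheimer's hyperk\"ahler quotient construction, and the converse via a Torelli-type classification. For existence, I would realize the desired structure as a hyperk\"ahler reduction of a flat quaternionic vector space. Fix the regular representation $R = \C[\Gamma]$ and let $Q = \C^2$ be the defining two-dimensional representation coming from $\Gamma \subset SU(2)$. The finite-dimensional space of $\Gamma$-equivariant data
\[
 \mathbf{V} = (Q \otimes \mathrm{End}(R))^{\Gamma}
\]
carries a flat hyperk\"ahler structure, the quaternionic action being supplied by the symplectic form on $Q$ together with the Hermitian metric on $R$. The group $F = (U(R))^{\Gamma}/U(1)$ acts on $\mathbf{V}$ preserving this structure and admits a hyperk\"ahler moment map $\mu = (\mu_I, \mu_J, \mu_K)$ valued in $\mathfrak{f}^* \otimes \R^3$, where $\mathfrak{f} = \mathrm{Lie}(F)$. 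The center of $\mathfrak{f}^*$ is naturally identified with $H^2(M;\R)$, and under this identification a central level $\zeta = (\zeta_I, \zeta_J, \zeta_K)$ corresponds exactly to the prescribed triple $(\alpha_I, \alpha_J, \alpha_K)$. One then forms the hyperk\"ahler quotient $X_\zeta = \mu^{-1}(\zeta)/F$ and reads off the K\"ahler periods $[\omega_A] = \alpha_A$ directly from $\zeta$.

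The decisive algebraic point is that the non-degeneracy hypothesis --- that every $\Sigma \in H_2(M;\Z)$ with $\Sigma \cdot \Sigma = -2$ has $\alpha_A(\Sigma) \neq 0$ for some $A$ --- is precisely the genericity condition on $\zeta$ ensuring that $F$ acts freely on $\mu^{-1}(\zeta)$; the $(-2)$-classes are the roots of the ADE system attached to $\Gamma$, and the walls avoided by a generic $\zeta$ are exactly the hyperplanes they cut out. Freeness makes $X_\zeta$ a smooth hyperk\"ahler four-manifold. Two analytic facts then remain: that $X_\zeta$ is diffeomorphic to the minimal resolution of $\C^2/\Gamma$ (comparing the level-zero quotient with $\C^2/\Gamma$ and interpreting generic $\zeta$ as a simultaneous resolution), and that the reduced metric is ALE of order $4$, asymptotic to the flat cone $\C^2/\Gamma$.

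For the converse, let $(N, g, I, J, K)$ be any hyperk\"ahler ALE four-manifold of order $4$. Its asymptotic cone is flat $\C^2/\Gamma$ for a unique finite $\Gamma \subset SU(2)$, so $\Gamma$ is forced by the geometry at infinity, and the resolution topology of $N$ is pinned down by the classification of the possible ends together with the intersection form. Since $N$ is smooth, its period point $([\omega_I], [\omega_J], [\omega_K])$ automatically satisfies the non-degeneracy condition, hence agrees with the period of some $X_\zeta$. It then remains to show that a hyperk\"ahler ALE metric is determined up to tri-holomorphic isometry by its topology together with its three periods, which identifies $N$ with that $X_\zeta$.

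The main obstacle lies in the two analytic statements flanking the algebra. Establishing that the reduced metric is genuinely ALE of order $4$, rather than merely complete and Ricci-flat, demands a careful asymptotic analysis of the moment-map equations near infinity, comparing the quotient metric to the flat model on $\C^2/\Gamma$. Harder still is the uniqueness statement underlying the converse, namely that the period map is injective on the non-degenerate locus; this is the genuine heart of the argument, since it is a rigidity theorem for Ricci-flat ALE metrics with prescribed periods and cannot be extracted from the algebraic quotient construction alone.
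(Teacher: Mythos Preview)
The paper does not prove this proposition at all: it is quoted verbatim as Kronheimer's classification theorem, with a reference to \cite{Kr89}, and is used only as a black box (in particular, in the Remark after Theorem~\ref{theoremin: toric}, to extract a $(-2)$-class from the bubble). So there is nothing in the paper for your proposal to be compared against.

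That said, your sketch is a faithful outline of Kronheimer's original argument: the existence direction via the hyperk\"ahler quotient of $(Q\otimes\mathrm{End}(R))^{\Gamma}$ by $F$, with the ADE root system identifying the non-degeneracy condition as the complement of the walls; and the converse via the Torelli-type uniqueness (actually proved in Kronheimer's companion paper). Your identification of the two hard analytic points---the ALE asymptotics of order $4$ and the injectivity of the period map---is accurate. For the purposes of this paper, however, none of that is needed; you should simply cite the result.
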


 As a corollary of this property, we know $\tilde{X}$ must be diffeomorphic to a
 minimal resolution of $\C^2 / \Gamma$ for some finite group $\Gamma \subset
 SU(2)$.\\

  Now let's consider the property of $X$ by its submanifolds.
 On the deepest bubble $(X, g_{\infty}, J_{\infty})$, let
 $\omega_{\infty}$ be the metric form determined by $g_{\infty}$ and
 $J_{\infty}$.

 \begin{proposition}
 For every closed 2-dimensional submanifold $C$ of $X$, we have
 \begin{align*}
    \int_{C} \omega_{\infty} =0.
 \end{align*}
 In particular, $(X, g_{\infty}, J_{\infty})$ doesn't contain any
 compact holomorphic curve.
 \label{proposition: noholomorphic}
 \end{proposition}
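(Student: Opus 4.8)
The plan is to use the single structural feature that makes the bubble special: $\omega_{\infty}$ is a \emph{rescaled} limit of Kähler forms whose cohomology class is pinned to the integral class $2\pi c_1(M)$, while the rescaling factors $Q_i$ blow up. A compact cycle in $X$ can only carry a finite amount of $\omega_{\infty}$-area, whereas any nonzero period of $\omega_{g(t_i)}$ against an integral cycle is bounded below by a fixed positive constant. These two facts are incompatible once one multiplies by $Q_i \to \infty$, which forces every period of $\omega_{\infty}$ over a compact cycle to vanish.

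First I would transplant the surface $C$ back to $M$. The convergence $(M, x_i, g_i(0)) \to (X, x_{\infty}, g_{\infty})$ is Cheeger--Gromov, so there are open sets $\Omega_i \subset X$ exhausting $X$ and diffeomorphisms $\phi_i : \Omega_i \to \phi_i(\Omega_i) \subset M$ with $\phi_i^* g_i(0) \sconv g_{\infty}$ and $J_i \to J_{\infty}$, hence $\phi_i^* \omega_{g_i(0)} \sconv \omega_{\infty}$. Since $C$ is compact we have $C \subset \Omega_i$ for all large $i$; set $C_i = \phi_i(C)$, a closed surface in $M$ carrying an integral homology class $[C_i] \in H_2(M;\Z)$. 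Smooth convergence of the forms on the compact set $C$ then yields
\[
\int_C \omega_{\infty} \;=\; \lim_{i\to\infty} \int_C \phi_i^* \omega_{g_i(0)} \;=\; \lim_{i\to\infty} \int_{C_i} \omega_{g_i(0)}.
\]

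Next I would feed in the two inputs. From the rescaling $g_i(0) = Q_i\, g(t_i)$ we get $\omega_{g_i(0)} = Q_i\, \omega_{g(t_i)}$, so $\int_{C_i}\omega_{g_i(0)} = Q_i \int_{C_i}\omega_{g(t_i)}$. Because the normalized flow (\ref{eqn: kahlerricciflow}) preserves the canonical class, $[\omega_{g(t_i)}] = 2\pi c_1(M)$ for every $i$, and since $c_1(M)$ is integral the period is quantized,
\[
\int_{C_i}\omega_{g(t_i)} \;=\; 2\pi\, \langle c_1(M), [C_i]\rangle \;\in\; 2\pi\Z.
\]
Combining the displays, $\int_C\omega_{\infty} = \lim_i 2\pi\, Q_i \langle c_1(M),[C_i]\rangle$; the left side is finite while $Q_i \to \infty$, so the integer $\langle c_1(M),[C_i]\rangle$ must vanish for all large $i$ (otherwise the right side has modulus at least $2\pi Q_i \to \infty$), giving $\int_C \omega_{\infty} = 0$. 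For the final assertion, if $C$ were a compact holomorphic curve then it is a complex submanifold of the Kähler manifold $(X, g_{\infty}, J_{\infty})$, so by Wirtinger $\omega_{\infty}|_C$ is its strictly positive area form and $\int_C \omega_{\infty} > 0$, contradicting what we just proved.

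The main obstacle is entirely in the first step's bookkeeping rather than in any analytic estimate: one must check that the Cheeger--Gromov diffeomorphisms are genuinely defined on a neighborhood of $C$ and carry it to an honest integral cycle in $M$, so that the quantization $\langle c_1(M),[C_i]\rangle \in \Z$ is legitimate. Once the cycle is pushed into $M$, the conclusion is forced — the quantization of periods against the fixed integral class $c_1(M)$, together with the divergence of the scaling factors $Q_i$, does all the work.
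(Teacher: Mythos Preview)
Your argument is correct and follows essentially the same route as the paper's proof: push $C$ back to cycles $C_i\subset M$ via the Cheeger--Gromov diffeomorphisms, use that $[\omega_{g_i(0)}]=Q_i\cdot 2\pi c_1(M)$ so the period $\int_{C_i}\omega_{g_i(0)}$ is $Q_i$ times an integer, and conclude from finiteness of the limit that this integer is eventually zero. The only cosmetic difference is that the paper bounds $|\int_{C_i}\omega_i(0)|$ by $\mathrm{Area}_{g_i(0)}(C_i)$ via Wirtinger before passing to the limit, whereas you invoke the convergence $\int_C\omega_\infty=\lim_i\int_{C_i}\omega_{g_i(0)}$ directly; both reach the same quantization-versus-blowup contradiction.
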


 \begin{proof}
  Fix a  closed 2-dimensional submanifold $C \subset X$. By the smooth
  convergence property, we know there is a sequence of closed 2-dimensional
  smooth manifolds $C_i \subset M$ such that
 \begin{align}
    (C_i, g_i(0)|_{C_i}) &\stackrel{C^\infty}{\to} (C, g_{\infty}|_{C}
    ); \notag \\
    (C_i, \omega_i(0)|_{C_i}) &\stackrel{C^\infty}{\to} (C, \omega_{\infty}|_{C}). \label{eqn: convergence}
 \end{align}
 Therefore,
 \begin{align*}
  Area_{g_{\infty}}(C) = \lim_{i \to \infty} Area_{g_i(0)}(C_i).
 \end{align*}
 Use Wirtinger's inequality, we get
 \begin{align*}
   |\int_{C_i} \omega_i(0) |&=| \int_{C_i} \cos \alpha d\mu_{g_i(0)}| \\
   &\leq
   \int_ {C_i} |\cos \alpha| d\mu_{g_i(0)}\\
   &\leq \int_{C_i} d\mu_{g_i(0)} = Area_{g_i(0)}(C_i)
 \end{align*}
 where $\alpha$ is the K\"{a}hler angle.  Consequently, for large
 $i$, the following inequality hold
 \begin{align*}
   |\int_{C_i} \omega_i(0)| \leq 2 Area_{g_{\infty}} (C).
 \end{align*}
 On the other hand, we know $[\omega_i(0)]= Q_i c_1(M)$, this tells
 us that
 \begin{align}
   \int_{C_i} \omega_i(0) = Q_i \int_{C_i} c_1(M) =Q_i a_i
 \label{eqn: intersection}
 \end{align}
 where we denote $a_i = \int_{C_i} c_1(M) \in \Z$.  So we have
 inequality
 \begin{align*}
    Q_i|a_i| \leq 2Area_{g_{\infty}} C
 \end{align*}
 or
 \begin{align*}
  |a_i| \leq \frac{2 Area_{g_{\infty}} C}{Q_i}.
 \end{align*}

 Note that $Area_{g_{\infty}} C$ is a fixed number and $Q_i \to \infty$, so
 $|a_i| \to 0$. However, $a_i$ are integers. This forces that for
 large $i$, $a_i \equiv 0$.  Now we go
  back to equality (\ref{eqn: intersection})
 and see for large $i$, we have
 \begin{align*}
    \int_{C_i} \omega_i \equiv 0.
 \end{align*}
 Using smooth convergence property,
 equation (\ref{eqn: convergence}) yields
 \begin{align}
   \int_{C} \omega_{\infty} = \lim_{i \to \infty} \int_{C_i}
   \omega_i(0) =0.
 \label{eqn: cruciallimit}
 \end{align}
 \end{proof}

 \begin{remark}
   The reason for no compact divisors in $X$ is that Ricci flow evolves metric forms
 continuously, so
 it cannot change the class which is discrete.    This should be some common
 phenomenon in geometric flow.  For example,  in~\cite{Sj}, Jeff Streets proved that on a nontrivial bundle, the
 base manifold of a blowup limit along a renormalization group flow
 must be noncompact.
 \end{remark}

 Combining the previous propositions, we can conclude this section
 by Theorem~\ref{theoremin: bubble}.

 \section{Bound Riemannian Curvature of Flow on Toric Fano Surfaces}
 This section is devoted to the proof of
 Theorem~\ref{theoremin: toric}.
 \begin{lemma}
    If $(X, g_{\infty}, J_{\infty})$ is a  bubble coming out of a \KRF solution
    with toric symmetric metrics, then $(X, g_{\infty}, J_{\infty})$ is also
    a toric surface.  Moreover, $X$ is simply connected,
    $b_2(X)>0$ and $H_2(X)$
    is generated by holomorphically embedded $\CP_1$'s in $X$.
 \label{lemma: toricdivisor}
 \end{lemma}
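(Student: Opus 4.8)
The plan is to transport the toric symmetry of the flow to the blow-up limit $X$ and then to extract all four assertions from the resulting toric structure, combined with the hyper-K\"ahler ALE classification already established in Proposition~\ref{proposition: kroheimer}.

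First I would record the symmetry of the flow. The toric surface $M$ carries an algebraic torus $(\C^*)^2$ with a dense open orbit $U\cong(\C^*)^2$, whose maximal compact torus $T^2$ acts by isometries and holomorphically on the toric symmetric initial metric; since the K\"ahler Ricci flow preserves the symmetries of its initial data, every $(M,g(t),J)$ is $T^2$-invariant. Consequently each rescaled metric $g_i(0)$ is $T^2$-invariant, the complex structures $J_i$ are $T^2$-invariant, and $|Rm|_{g_i(0)}$ is a $T^2$-invariant function. The heart of the matter is to pass this symmetry to the pointed Cheeger-Gromov limit $(X,x_\infty,g_\infty,J_\infty)$: by equivariant convergence a subsequence of the $T^2$-actions converges to an isometric action of a closed group $G\subset\mathrm{Isom}(X,g_\infty)$, and since the actions are holomorphic with $J_i\to J_\infty$, the limit action preserves $J_\infty$.

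The main obstacle is to show that $G$ is again an effective $2$-torus, so that $(X,J_\infty)$ is genuinely toric. Compactness of $G$ is forced by the ALE geometry: $X$ has a single end modeled on $(\C^2/\Gamma_\infty)\setminus\{0\}$ with compact cross-section $S^3/\Gamma_\infty$, so $\mathrm{Isom}(X,g_\infty)$ is compact and $G$ is a torus $T^k$ with $k\le 2$. To rule out a drop in rank I would use that the rescaling $g_i=Q_i g$ with $Q_i\to\infty$ expands all lengths: the $T^2$-orbits through the maximal curvature points must have $g_i$-length bounded, for otherwise they would open up into a noncompact orbit through $x_\infty$, contradicting compactness of $\mathrm{Isom}(X,g_\infty)$; in fact, since $|Rm|_{g(t_i)}$ is $T^2$-invariant, these orbits pinch and the base points accumulate on the torus-fixed locus. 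Linearizing the rotation action near such a point then shows that no nontrivial element of $T^2$ can degenerate to the identity in the limit, so $G$ stays $2$-dimensional and effective. This equivariant limiting step, excluding any degeneration of the torus, is where the real work lies.

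With an effective isometric holomorphic $T^2$-action in hand, complexifying by the gradient flows $J_\infty\nabla\mu$ of its moment map produces a holomorphic $(\C^*)^2$-action on $(X,J_\infty)$; effectiveness forces the generic orbit to be $2$-complex-dimensional, hence open and dense, so $(X,g_\infty,J_\infty)$ is a smooth toric surface. Such a torus action is incompatible with the nonabelian binary dihedral and exceptional subgroups of $SU(2)$, so in Proposition~\ref{proposition: kroheimer} the structure group $\Gamma$ is cyclic and $X$ is the $A_{n-1}$ resolution of $\C^2/\Z_n$; as $|Rm|_{g_\infty}(x_\infty)\ne 0$ the space is not flat $\C^2$, forcing $n\ge 2$. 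The remaining assertions are read off from the corresponding fan: its ray generators span the lattice $N\cong\Z^2$, so $\pi_1(X)=1$ and $X$ is simply connected, while its $n-1$ interior rays give $b_2(X)=n-1>0$ with $H_2(X)$ generated by the associated compact torus-invariant curves, each a holomorphically embedded $\CP_1$.
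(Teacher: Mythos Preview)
Your overall strategy---pass the $T^2$-symmetry to the Cheeger--Gromov limit, then read off the topology from the resulting toric structure---is the right one and matches the paper's intent. But there is a genuine logical tangle in your last paragraph, and your route diverges from the paper's in a way worth flagging.

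The circularity is this: you invoke Kronheimer's classification (Proposition~\ref{proposition: kroheimer}) to identify $X$ with the $A_{n-1}$ resolution, and only \emph{afterwards} read off $\pi_1(X)=1$ from the fan. But Kronheimer classifies \emph{hyper-K\"ahler} ALE $4$-manifolds, and in this paper that structure is established only for the universal cover $\tilde{X}$, via the holonomy argument in Section~4. To apply it to $X$ itself you already need $X=\tilde X$, i.e.\ simple connectedness. The fix is to reverse the order: once you know $X$ carries an effective Hamiltonian $T^2$-action, a component of the moment map is a Morse function with only even-index critical points, and since $X$ is ALE there are finitely many of them; the resulting CW structure with only even cells gives $\pi_1(X)=1$ directly. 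After that, Kronheimer is available---but at that point you no longer need it, since the same Morse/fan picture already hands you $b_2(X)>0$ (as $X$ is not flat) and the torus-invariant compact curves generating $H_2(X)$.

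This is exactly how the paper proceeds, except that it does not spell out the equivariant-convergence step at all: it simply cites Proposition~16 of \cite{CLW} to obtain that $X$ is toric with $H_2(X)$ nontrivial and generated by holomorphic $\CP^1$'s, and then extracts the even-index Morse function from the proof of that proposition to get simple connectedness. In particular the paper never touches Kronheimer for this lemma. Your attempt to argue the no-rank-drop step explicitly (orbits through maximal-curvature points pinch, linearize near the fixed locus) is the substantive content hidden inside the \cite{CLW} citation; you correctly identify it as ``where the real work lies,'' and as written it remains a sketch rather than a proof. If you want a self-contained argument, that is the step to make rigorous; the Kronheimer detour can be dropped entirely.
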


 \begin{proof}
    According to the construction of $X$, we have the following
    convergence in Cheeger-Gromov sense
 \begin{align*}
      (M, x_i, g_i(0)) \sconv (X, x_{\infty}, g_{\infty}).
 \end{align*}
 Since the toric symmetry property will be preserved under \KRF, we see that
 every metric $g_i(0)=Q_ig(t_i)$ is a toric symmetric metric.     Using exactly
 the statement of Proposition 16 of~\cite{CLW}, we know that $X$
 is a toric surface with nontrivial $H_2(X)$. Moreover, $H_2(X)$ are
 generated by holomorphic $\CP^1$'s in $X$.    Actually, according
 to this proof, there is a Morse function defined on $X$.
 Furthermore, every critical point of $X$ has even indices.  Therefore $X$ is homotopic to
 a $CW$-complex with only cells of even dimension.  Consequently,
 $X$ must be simply connected by considering the Euler
 characteristic number of $X$.
 \end{proof}

 Now we are able to prove Theorem~\ref{theoremin: toric}.
 \begin{proof}
   We only need to show Riemannian curvature is uniformly bounded.
   If Riemannian curvature is not bounded, then we can blowup a toric
   deepest bubble $X$.  According to Proposition~\ref{proposition: noholomorphic}, it
   doesn't contain any compact divisor. On the other hand,
   Lemma~\ref{lemma: toricdivisor} implies that $X$ must contain
   a $\CP^1$ as a compact divisor. Contradiction!
 \end{proof}

 \begin{remark}
 If $M \sim \CP^2$ or $\Blow{}$,
 Theorem~\ref{theoremin: toric} can be proved in a different
 way.    Suppose Riemannian curvature is unbounded. Then  we can obtain a bubble
 $(X, g_{\infty}, J_{\infty})$ which is simply connected. Therefore itself is a hyper
 K\"ahler ALE space.  By Kroheimer's classification (Proposition~\ref{proposition: kroheimer}),
 $X$ must contain a
 compact smooth 2-dimensional submanifold $C$ whose self intersection number is
 $-2$. By the smooth  convergence property, we can get a sequence of
 closed smooth 2-dimensional smooth manifolds $C_i \subset M$ such
 that $(C_i, g_i(0)|_{C_i}) \sconv (C, h|_C)$. In particular $C_i$ is
 diffeomorphic to $C$ and a small tubular neighborhood of $C_i$ is
 diffeomorphic to a small tubular neighborhood of $C$. Therefore,
 $[C_i] \cdot [C_i] = [C] \cdot [C] =-2$.  Note that $[C_i] \in
 H_2(M, \Z)$.   However, $H_2(\CP^2, \Z)$ and $H_2(\Blow{}, \Z)$
 doesn't contain any element of self intersection number $-2$. So we obtain a
 contradiction!
 \end{remark}

 \begin{remark}
  Theorem~\ref{theoremin: toric} needs the condition that initial
  metric $g(0)$ is toric symmetric. Natasa Sesum conjectured that
  the toric symmetry condition is not necessary.  In our proof, the
  symmetry is only
  a technical condition, we believe that it can be dropped. Actually, we
  believe that starting from any metric in canonical class, \KRf will
  evolve the metrics into a KRS on every toric manifold. It will
  be discussed in a subsequent paper.
 \end{remark}

 \vspace{0.5in}

 Xiuxiong Chen,  Department of Mathematics, University of
 Wisconsin-Madison, Madison, WI 53706, USA; xiu@math.wisc.edu\\

 Bing  Wang, Department of Mathematics, University of Wisconsin-Madison,
 Madison, WI, 53706, USA; bwang@math.wisc.edu

 \qquad \qquad \quad \; Department of Mathematics, Princeton University,
  Princeton, NJ 08544, USA; bingw@math.princeton.edu

\end{document}